\documentclass[preprint]{elsarticle}
\usepackage[utf8]{inputenc}

\usepackage{lineno,hyperref}
\modulolinenumbers[1]

\usepackage{latexsym,amssymb,amsfonts,amsmath}
\usepackage{amsthm}
\usepackage{mathtools}
\usepackage{verbatim}
\usepackage{paralist}
\usepackage[all,color]{xy}
\usepackage{pdfpages}
\usepackage{setspace}
\usepackage{graphicx}
\usepackage{caption}
\large

\theoremstyle{plain}
\newtheorem{theorem}{Theorem}
\newtheorem{proposition}[theorem]{Proposition}
\newtheorem{lemma}[theorem]{Lemma}

\theoremstyle{definition}
\newtheorem{definition}[theorem]{Definition}

\newtheorem{remark}[theorem]{Remark}
\newtheorem{notation}[theorem]{Notation}
\newtheorem{claim}[theorem]{Claim}

\DeclareMathOperator{\lcm}{lcm}









\bibliographystyle{elsarticle-harv}

\makeatletter
\def\ps@pprintTitle{%
   \let\@oddhead\@empty
   \let\@evenhead\@empty
   \let\@oddfoot\@empty
   \let\@evenfoot\@oddfoot
}
\makeatother

\begin{document}

\begin{frontmatter}
\title{Vertex Alternating-Pancyclism in 2-Edge-Colored Graphs\tnoteref{t1}}
\tnotetext[t1]{This research was supported by grants UNAM-DGAPA-PAPIIT IN102320.}

\author[imunam]{Narda Cordero-Michel\corref{cor1}}
\ead{narda@matem.unam.mx}
\author[imunam]{Hortensia Galeana-S\'anchez}
\ead{hgaleana@matem.unam.mx}

\cortext[cor1]{Corresponding author}
\address[imunam]{Instituto de Matem\'aticas, Universidad Nacional Aut\'onoma de M\'exico, Ciudad Universitaria, CDMX, 04510, M\'exico}


\begin{abstract}
An \emph{alternating cycle} in a 2-two-edge-colored graph is a cycle such that any two consecutive edges have different colors.  Let $G_1$, \ldots, $G_k$ be a collection of pairwise vertex disjoint 2-edge-colored graphs. The \emph{colored generalized sum} of   $G_1$, \ldots, $G_k$, denoted by $ \oplus_{i=1}^k G_i$, is the set of all 2-edge-colored graphs $G$ such that: (i) $V(G)=\bigcup_{i=1}^k V(G_i)$, (ii) $G\langle V(G_i)\rangle\cong G_i$ for $i=1,\ldots, k$ as edge-colored graphs where $G\langle V(G_i)\rangle$ has the same coloring as $G_i$ and (iii) between each pair of vertices in different summands of $G$ there is exactly one edge, with an arbitrary but fixed color. 
A graph $G$ in $\oplus_{i=1}^k G_i$ will be called a \emph{colored generalized sum} (c.g.s.) and we will say that $e\in E(G)$ is an  \emph{exterior edge} iff $e\in E(G)\setminus \left(\bigcup_{i=1}^k E(G_i)\right)$. The set of exterior edges will be denoted by $E_\oplus$.
A colored graph $G$ is said to be a \emph{vertex alternating-pancyclic graph}, whenever for each vertex $v$ in $G$, and for each $l\in\{3,\ldots, |V(G)|\}$, there exists in $G$ an alternating cycle of length $l$ passing through $v$.

The topics of pancyclism and vertex-pancyclism are deeply and widely studied by several authors. The existence of alternating cycles in 2-edge-colored graphs has been studied because of its many applications. 
In this paper, we give sufficient conditions for a graph $G\in \oplus_{i=1}^k G_i$ to be a vertex alternating-pancyclic graph.
\end{abstract}

\begin{keyword}
2-edge-colored graph, alternating cycle, vertex alternating-pancyclic graph
\end{keyword}

\end{frontmatter}


\section{Introduction}
\label{sec:intro}

Let $G$ be an edge-colored multigraph. 
An \emph{alternating walk} in $G$, is a walk such that any two consecutive edges have different colors. 

Several problems have been modeled by edge-colored multigraphs, the study of applications of alternating walks seems to have started in \cite{Petersen1891193}, according to \cite{Bang-Jensen199739}, and ever since it has crossed diverse fields, such as genetics \cite{Doringer198795,Dorninger1994159,Dorninger1987321,Pevzner199577}, transportation and connectivity problems \cite{Gourves20101404,Wirth200173}, social sciences \cite{Chow199449} and graph models for conflict resolutions \cite{Xu2010318,Xu2009470,Xu2009353}, as pointed out in \cite{Contreras-Balbuena201755}.

The Hamiltonian alternating path and cycle problems are $\mathcal{NP}$-complete even for two colors, it was proved in \cite{Gorbenko2012204}, and so the problem of deciding if a given graph is alternating-pancyclic is as difficult as those two problems.

In \cite{Das1982105}, Das characterized 2-edge-colored complete bipartite multigraphs which are vertex alternating-pancyclic and, in \cite{Bang-Jensen199739}, Bang-Jensen and Gutin characterized 2-edge-colored complete multigraphs which are vertex alternating-pancyclic.

In other publications, such as \cite{Fujita20111391} and \cite{Wang20094349}, authors studied the existence of alternating cycles of certain lengths in terms of vertex degrees.  \\

In this paper we study a broad class of 2-edge-colored graphs, namely the colored generalized sum of 2-edge-colored graphs, we give sufficient conditions for a graph in the c.g.s. of $k$ Hamiltonian alternating graphs to be a vertex alternating-pancyclic graph:

A graph $G\in \oplus_{i=1}^k G_i$, where $G_1$, $G_2$, \ldots, $G_k$ is a collection of $k\geq 2$ vertex disjoint graphs with Hamiltonian alternating cycles, $C_1$, $C_2$, \ldots, $C_k$, respectively, is vertex alternating-pancyclic if $G$ contains no good cycle and, for each pair of different indices $i,j\in [1,k]$, in $C_i$ there is a non-singular vertex with respect to $C_{j}$.\footnote{The definition of good cycle is in Section \ref{sec:main results} and the definition of non-singular vertex is in Section \ref{sec:definitions}.}

It should be noted that the proofs in this paper carry on an implicit algorithm to construct the alternating cycles.

\section{Definitions}
\label{sec:definitions}
In this paper $G=(V(G),E(G))$ will denote a simple graph. 
A \emph{$k$-edge-coloring} of $G$ is a function $c$ from the edge set, $E(G)$, to a set of $k$ colors, $ \{1,2,\ldots,k\}$. A graph $G$ provided with a $k$-edge-coloring is a \emph{$k$-colored-graph}. 

A path or a cycle in  $G$  will be called an \emph{alternating path} or an \emph{alternating cycle} whenever two consecutive edges have different colors. 
An alternating cycle containing each vertex of the graph is a \emph{Hamiltonian alternating cycle} and a graph containing a Hamiltonian alternating cycle will be called a \emph{Hamiltonian alternating graph}.
A 2-edge-colored graph $G$ of order $2n$  is \emph{vertex alternating-pancyclic} iff, for each vertex $v\in V(G)$ and each $k\in \{2,\ldots,n\}$, $G$ contains an alternating cycle of length $2k$ passing through $v$. 

For further details we refer the reader to \cite{Bang-Jensen2009} pages 608-610.



\begin{remark}
Clearly the c.g.s. of two vertex disjoint graphs if well defined.
Let $G_1$, $G_2$, $G_3$ be three vertex disjoint 2-edge-colored graphs. It is easy to see that the sets $(G_1\oplus G_2)\oplus G_3$ defined as $\bigcup_{G\in G_1\oplus G_2} G\oplus G_3 $ and $G_1 \oplus (G_2\oplus G_3)$ defined as $\bigcup_{G'\in G_2\oplus G_3} G_1\oplus G'$ are equal, thus $\oplus_{i=1}^3 G_i=(G_1\oplus G_2)\oplus G_3 =G_1\oplus(G_2\oplus G_3)$ is well defined. By means of an inductive process it is easy to see that  the c.g.s. of $k$ vertex disjoint 2-edge-colored graphs is well defined and  associative.
\end{remark}

\begin{remark}
\label{remark subsuma}
Let $G_1$, $G_2$, \ldots, $G_k$ be a collection of pairwise vertex disjoint 2-edge-colored graphs; $G\in\oplus_{i=1}^k G_i$; and $J\subset [1,k]$. The induced subgraph of $G$ by $\bigcup_{j\in J} V(G_j)$, $H=G\langle \bigcup_{j\in J} V(G_j) \rangle$, belongs to the c.g.s. of $\{G_j\}_{j\in J}$.
\end{remark}

\begin{notation}
Let $k_1$ and $k_2$ be two positive integers, where $k_1\leq k_2$. We will denote by $[k_1,k_2]$ the set of integers $\{k_1, k_1+1, \ldots, k_2\}$.
\end{notation}

\begin{notation}
\label{neighbor in alternating cycle}
Let $C=x_0 x_1 \cdots x_{2n-1} x_0$ be an alternating cycle. For each $v\in V(C)$, we will denote by $v^r$ (resp. $v^b$) the vertex in $C$ such that $vv^r\in E(C)$  is red (resp. $vv^b\in E(C)$ is blue). 
Notice that if $v=x_i$ then $\{x_{i-1},x_{i+1}\}=\{v^r,v^b\}$.

If more that one alternating cycle contains $v$, we will write $v^{r}_C$ (resp. $v^{b}_C$).
\end{notation}

\begin{definition}
Let $G$ be a 2-edge-colored graph and let $C_1=x_0x_1\cdots x_{2n-1}x_0$ and $C_2=y_0y_1\cdots y_{2m-1}y_0$ be two vertex disjoint alternating cycles. Let $vw$ be an edge with $v\in V(C_1)$ and $w \in V(C_2)$. If $c(vw)=\text{red}$ (resp. $c(vw)=\text{blue}$) we will say that $vw$, $v^rw^r$ (resp. $vw$, $v^bw^b$) is a \emph{good pair of edges} whenever $c(v^rw^r)=\text{red}$ (resp. $c(v^bw^b)=\text{blue}$).

Whenever there is a good pair of edges between two vertex disjoint alternating cycles $C_1$ and $C_2$, we simply say that there is a good pair.
\end{definition}

\begin{remark}
\label{remark:monochromatic cycle}
Notice that $vv^rw^rwv$ (resp. $vv^bw^bwv$) is a monochromatic 4-cycle whenever  $vw$, $v^rw^r$ (resp. $vw$, $v^bw^b$) is a good pair.
\end{remark}

\begin{figure}
\begin{center}   
\includegraphics[scale=0.9]{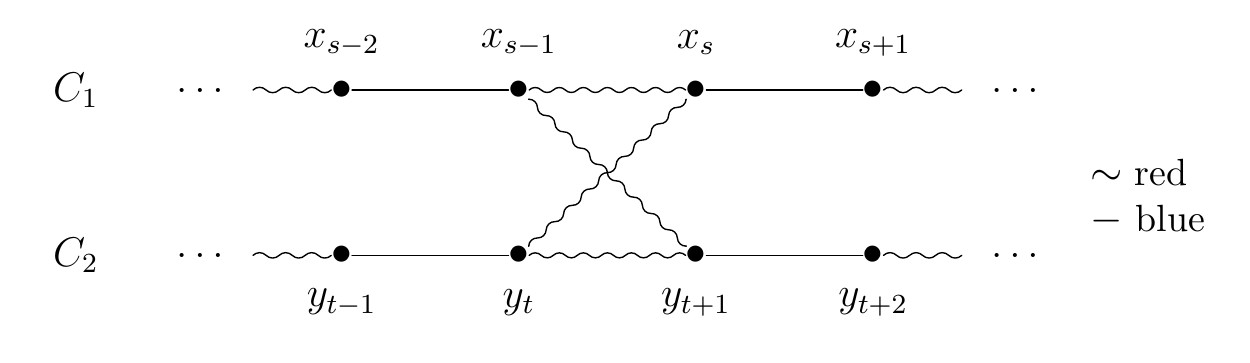}
\caption{A good pair of edges.}
\label{fig:good pair}
\end{center} 
\end{figure}

\begin{remark}
\label{remark par bueno}
Let $G$ be a 2-edge-colored graph and let $C_1=x_0x_1\cdots x_{2n-1}x_0$ and $C_2=y_0y_1\cdots y_{2m-1}y_0$ be two vertex disjoint alternating cycles. A pair of edges  $x_sy_t$, $x_{s'}y_{t'}$ with $s\in [0, 2n-1]$, $s'\in \{s-1, s+1\}$, $t\in [0, 2m-1]$, $t'\in \{t-1,t+1\}$ where all the subscripts are taken modulo $2n$ and $2m$, respectively,  is  a good pair  whenever $x_sx_{s'}y_{t'}y_tx_s$ is a monochromatic 4-cycle (Figure \ref{fig:good pair}). This is consequence of the definition of a good pair and Notation \ref{neighbor in alternating cycle}.
\end{remark}

In the study of alternating cycles, the more general case is the one with two colors and so we will work with 2-edge-colored graphs. 
In what follows: any graph $G$ will denote a 2-edge-colored graph and $c: E(G)\to \{\text{red, blue}\}$ will denote its edge coloring and we will simply say a graph instead of a 2-edge-colored graph; a 2-edge-colored cycle $C$ which is properly colored will simply be called an alternating cycle. In our figures curly lines will represent red edges while straight lines will represent blue edges, sometimes we will use dotted lines to represent edges that we ignore to construct a cycle, we will use double-dotted lines for red edges and dotted lines for blue edges.\\

From now on the subscripts for vertices in $C_1=x_0x_1\cdots x_{2n-1}x_0$ will be taken modulo $2n$ and for vertices in $C_2=y_0y_1\cdots y_{2m-1}y_0$ will be taken modulo $2m$.

\section{Preliminary results}
\label{sec:preliminary results}

In this section we will see a series of results that describe the behavior of exterior edges in a c.g.s. of two 2-edge-colored graphs.

\begin{proposition}
\label{propo merging cycles with a good pair of edges}
Let $C_1$ and $C_2$ be two disjoint alternating cycles in a graph $G$. If there is a good pair of edges between them, then there is an alternating cycle with the vertex set $V(C_1)\cup V(C_2)$.
\end{proposition}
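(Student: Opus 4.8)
The plan is to construct the required alternating cycle explicitly, by \emph{rerouting} $C_1$ and $C_2$ through the two exterior edges of the good pair. Write $C_1=x_0x_1\cdots x_{2n-1}x_0$ and $C_2=y_0y_1\cdots y_{2m-1}y_0$. By Remark~\ref{remark par bueno} the good pair can be described by a monochromatic $4$-cycle $x_sx_{s'}y_{t'}y_tx_s$ with $s'\in\{s-1,s+1\}$ and $t'\in\{t-1,t+1\}$; interchanging the names of the two colours if necessary, I may assume this $4$-cycle is red, so that the edges $x_sx_{s'}\in E(C_1)$, $y_ty_{t'}\in E(C_2)$ and the exterior edges $x_sy_t$, $x_{s'}y_{t'}$ are all red. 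The blue case is obtained from this one verbatim after swapping colours.

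First I would record the colours forced at the four corner vertices. Because $C_1$ is alternating and $x_sx_{s'}$ is red, the edge of $C_1$ at $x_s$ other than $x_sx_{s'}$ and the edge of $C_1$ at $x_{s'}$ other than $x_sx_{s'}$ are both blue; similarly the edge of $C_2$ at $y_t$ other than $y_ty_{t'}$ and the edge of $C_2$ at $y_{t'}$ other than $y_ty_{t'}$ are both blue. More generally: deleting one edge $e$ from an even alternating cycle leaves a Hamiltonian path of \emph{odd} length whose two end-edges both have the colour opposite to $c(e)$.

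Next I would exhibit the candidate cycle $C$: start at $x_s$ and traverse the Hamiltonian path $P_1$ of $C_1$ from $x_s$ to $x_{s'}$ that avoids the edge $x_sx_{s'}$ (it has $2n-1$ edges and, by the previous paragraph, both of its end-edges are blue); then take the red exterior edge $x_{s'}y_{t'}$; then traverse the Hamiltonian path $P_2$ of $C_2$ from $y_{t'}$ to $y_t$ that avoids $y_ty_{t'}$ (it has $2m-1$ edges, both end-edges blue); and finally close up with the red exterior edge $y_tx_s$. This is a cycle on $V(C_1)\cup V(C_2)$: $P_1$ uses every vertex of $C_1$ exactly once, $P_2$ every vertex of $C_2$ exactly once, and $V(C_1)\cap V(C_2)=\emptyset$; its length is $(2n-1)+1+(2m-1)+1=2n+2m$. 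It is alternating: inside $P_1$ and $P_2$ this is inherited from $C_1$ and $C_2$, and at the four junctions we have blue--then--red at $x_{s'}$ and at $y_t$, and red--then--blue at $y_{t'}$ and at $x_s$ (the last being the junction where $C$ closes: the red edge $y_tx_s$ followed by the blue first edge of $P_1$). Hence $C$ is an alternating cycle with $V(C)=V(C_1)\cup V(C_2)$, which is what we want; a figure of the rerouting makes the argument transparent.

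The only point that requires care is the parity bookkeeping of the second paragraph — that removing a single edge from an even alternating cycle produces a path whose two ends carry the same colour, opposite to that of the removed edge — since this is exactly what makes all four junctions alternate. Beyond that, the proof is a direct verification and I expect no real obstacle.
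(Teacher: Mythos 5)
Your proof is correct and takes essentially the same route as the paper's: both reroute along the Hamiltonian paths of $C_1$ and $C_2$ obtained by deleting the two cycle edges of the monochromatic $4$-cycle $x_sx_{s'}y_{t'}y_tx_s$ and close up with the two exterior edges of the good pair, checking alternation at the four junctions. Your observation that the end-edges of an even alternating cycle minus one edge both carry the opposite colour lets you treat the four positional cases uniformly, where the paper verifies one representative case and notes the others are similar; this is only a cosmetic difference.
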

\begin{proof}
Let $C_1 = x_0 x_1 \cdots  x_{2n-1} x_0$ and $C_2 = y_0y_1\cdots y_{2m-1}y_0$ be two vertex disjoint alternating cycles in $G$. 

Let $x_sy_t$ and $x_{s'}y_{t'}$ be a good pair, as in Remark \ref{remark par bueno}.
Without loss of generality suppose that $x_sx_{s'}y_{t'}y_tx_s$ is a red monochromatic 4-cycle. 
 
We will prove the case where $x_{s'}=x_{s-1} $ and $y_{t'}=y_{t+1}$,  \textit{i.e.}, $x_sx_{s-1}$, $x_{s-1}y_{t+1}$, $y_{t+1}y_t$ and $y_tx_s$ are red. The other three cases can be proved in a similar way.  
Since $C_1$ and $C_2$ are alternating cycles, we have that the edges $x_{s-2}x_{s-1}$, $x_{s}x_{s+1}$, $y_{t-1}y_t$ and $y_{t+1}y_{t+2}$ must be blue. 

Hence, $C=x_sC_1x_{s-1}y_{t+1}C_2y_{t}x_{s}$ is an alternating cycle with vertex set $V(C)=V(C_1)\cup V(C_2)$ (Figure \ref{fig:good pair of edges}). 
\end{proof}

\begin{figure}
\begin{center}   
\includegraphics[scale=0.9]{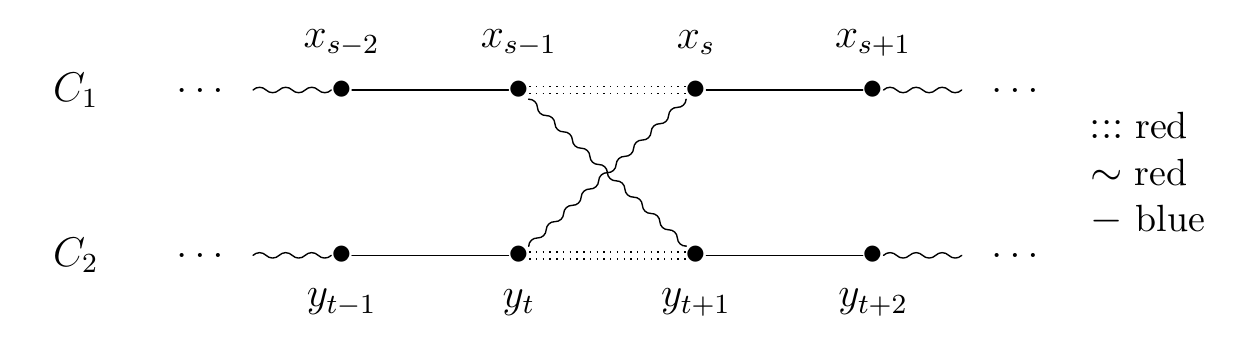}
\caption{A cycle using a good pair of edges.}
\label{fig:good pair of edges}
\end{center} 
\end{figure}

\begin{definition}
\label{definition parallel}

Let $G_1$ and $G_2$ be two vertex disjoint  graphs with Hamiltonian alternating cycles $C_1=x_0x_1\cdots x_{2n-1}x_0$ and $C_2=y_0 y_1 \cdots y_{2m-1} y_0$, respectively; $G\in G_1\oplus G_2$ be a c.g.s. with no good pair. For each exterior edge $u_0v_0$, $u_0\in V(C_1)$ and $v_0\in V(C_2)$, we construct a sequence of edges as follows: (i) If $u_0v_0$ is red (blue), let $u_1\in V(C_1)$ and $v_1\in V(C_2)$ such that, $u_0u_1\in E(C_1)$ is red (blue) and $v_0v_1\in E(C_2)$ is red (blue).
Since $G$ has no good pair, we have that $u_1v_1$ is blue (red). Observe that $u_0v_0$ and $u_1v_1$ have different colors. (ii) Assume that $u_0v_0$, $u_1v_1$, \ldots, $u_iv_i$ have been constructed in such a way that $u_0$, $u_1$, \ldots, $u_i\in V(C_1)$, $v_0$, $v_1$, \ldots, $v_i\in V(C_2)$, $c(u_jv_j)\neq c(u_{j+1}v_{j+1})$, $c(u_ju_{j+1})=c(v_jv_{j+1})$ for each $j$, $0\leq j \leq i-1$, and the sequence $u_0u_1\cdots u_i$ (resp. $v_0v_1\cdots v_i$) is a walk (notice that: when $l(C_1)\neq l(C_2)$ a same vertex can appear more than once in this process) contained in $C_1$ (resp. $C_2$) that moves along $C_1$ (resp. $C_2$)  in the same direction or in the opposite direction (thus, consecutive edges have different colors). (iii) Now, we construct $u_{i+1}v_{i+1}$. If $u_iv_i$ is red (blue), let $u_{i+1}\in V(C_1)$ and $v_{i+1}\in V(C_2)$ such that $u_iu_{i+1}\in E(C_1)$ is red (blue) and $v_iv_{i+1}\in E(C_2)$ is red (blue). Since $G$ has no good pair, we have that $u_{i+1}v_{i+1}$ is blue (red). (iv) The sequence finishes the first time that $u_kv_k=u_0v_0$. That is when $k=\lcm(2n,2m)$. Notice that $u_iv_i$ and $u_{i+1}v_{i+1}$ have different colors (and they are different edges), and thus $c(u_iv_i)=c(u_jv_j)$  if and only if $i\equiv j \pmod{2}$ (Figure \ref{fig:parallel class}). 

The \emph{parallel class} of $u_0v_0$ denoted by $P_{u_0v_0}$ is the set $\{u_iv_i\ |\ 0\leq i\leq k-1\}$ and two exterior edges $e_1, e_2$ will be named \emph{parallel} whenever there exists an exterior edge $zw$ such that $e_1, e_2 \in P_{zw}$.

Since $k=\lcm(2n,2m)$, for each vertex $w\in V(C_i)$, $i\in\{1,2\}$, there exists at least one edge in the parallel class $P_{u_0v_0}$ incident with $w$.
\end{definition}  

\begin{figure}
\begin{center}   
\includegraphics[scale=0.86]{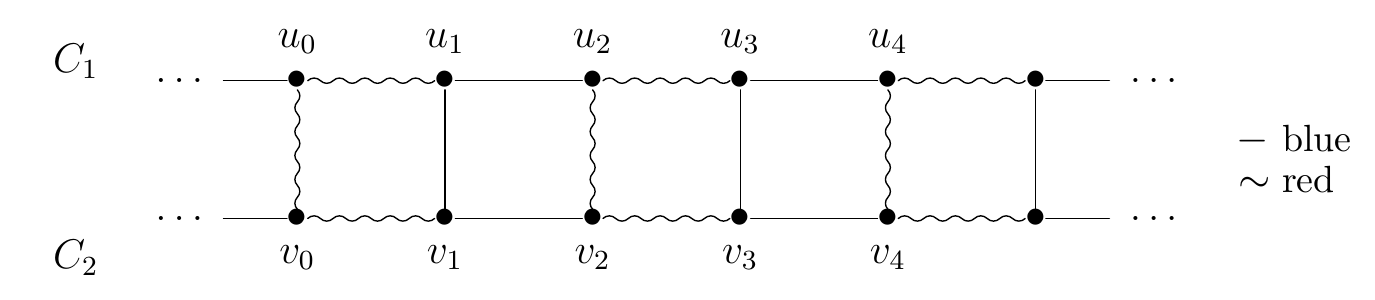}
\caption{The parallel class of $u_0v_0$, $P_{u_0v_0}$.}
\label{fig:parallel class}
\end{center} 
\end{figure}

\begin{remark}
\label{remark parallel class}
As a consequence of Definition \ref{definition parallel} we have that for any $uv\in E_\oplus$, the following assertions hold: (i) $uv\in P_{uv}$;  
(ii) $\left\vert P_{uv} \right\vert=\lcm(2n,2m)=2\lcm(n,m)$; (iii) $P_{uv}$ has $\lcm(n,m)$ red edges and $\lcm(n,m)$ blue edges; (iv) if $P_{uv}\cap P_{wz}\neq \emptyset$, then $P_{uv}= P_{wz}$. 

Observe that, whenever $n\neq m$ it may be more than one edge in $P_{uv}$ incident with $u$ or $v$.
\end{remark}

\begin{remark}
\label{partition of exterior edges}
Notice that, in view of Remark \ref{remark parallel class}, the set of parallel classes, $\mathcal{P}=\{P_{uv}\ |\ uv\in E_\oplus\}$, becomes a partition of the exterior edges.
\end{remark}

\begin{notation}
Let $C$ be an alternating cycle, given an arbitrary but fixed description of its vertices as $C=x_0x_1\cdots x_{2n-1}x_0$, we will say that two vertices $x,y\in V(C)$ are \emph{congruent modulo 2}, whenever their subscripts in $C$ are congruent modulo 2, and we will write $x\equiv y \pmod{2}$ (or by $x\equiv_C y \pmod{2}$, when $x$ and $y$ belong to more than one cycle).
\end{notation}

\begin{notation}
Let $G_1$, $G_2$, \ldots, $G_k$ be a collection of pairwise vertex disjoint 2-edge-colored graphs and take $G$ in $\oplus_{i=1}^k G_i$. For each $v\in V(G)$, we will denote by $d_r(v)$ (resp. $d_b(v)$) the number of red (resp. blue) exterior edges of $G$ incident with $v$.
\end{notation}

\begin{lemma}
\label{lemma no good pair}
Let $C_1=x_0x_1\cdots x_{2n-1}x_0$ and $C_2=y_0y_1\cdots y_{2m-1}y_0$ be two vertex disjoint  alternating cycles and $G$ be a  graph in $C_1 \oplus C_2$ such that $G$ has no good pair. 
The following assertions hold:
(i) There are exactly $2mn$ exterior edges of each color in $G$; (ii) Let $P_{uv}$ be a parallel class, each vertex $w\in V(C_i)$ is incident with  $\frac{\lcm(n,m)}{n}$ edges in $P_{uv}$ for $i=1$;   $\frac{\lcm(n,m)}{m}$ edges in $P_{uv}$ for $i=2$; and all of them are colored alike. Moreover, for vertices $w,x\in V(C_i)$, the edges in $P_{uv}$ incident with $w$ are colored alike as those incident with $x$ if and only if $w\equiv x \pmod{2}$; (iii) For each vertex $w\in V(C_i)$, if $d_r(w)=t$ and $d_b(w)=|V(C_{3-i})|-t $, then $d_r(x)=|V(C_{3-i})|- t=d_b(w)$ and $d_b(x)=t=d_r(w)$ for each $x\in \{w^r,w^b\}$. Furthermore, if $w,x\in V(C_i)$ then: 

\begin{equation*} d_r(x)=\begin{cases} d_r(w)&\text{iff }w\equiv x\pmod{2}\\   |V(C_{3-i})|-d_r(w)&\text{iff }w\not\equiv x\pmod{2}.
\end{cases}
\end{equation*} 
\end{lemma}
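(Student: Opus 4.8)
The three assertions all rest on the arithmetic of a single parallel class exposed in Definition~\ref{definition parallel}, so the plan is to prove (ii) first and then read off (i) and (iii) from it together with Remarks~\ref{remark parallel class} and~\ref{partition of exterior edges}.

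For (ii), the one step that needs care is making precise the ``moves along $C_1$ in the same direction'' remark in Definition~\ref{definition parallel}, i.e.\ that the walk $u_0u_1\cdots$ never backtracks on $C_1$. I would argue: since by construction $c(u_iu_{i+1})=c(u_iv_i)$ and $c(u_iv_i)\ne c(u_{i+1}v_{i+1})=c(u_{i+1}u_{i+2})$, the $C_1$-edges $u_iu_{i+1}$ and $u_{i+1}u_{i+2}$ at $u_{i+1}$ carry different colours, hence are the two distinct $C_1$-edges at $u_{i+1}$; so $u_{i+2}\ne u_i$ and, inductively, the $C_1$-subscript of $u_i$ equals $s(u_0)+\delta i\pmod{2n}$ for a fixed $\delta\in\{\pm 1\}$ (and the analogous statement holds on $C_2$ with period $2m$). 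Consequently $u_iv_i=u_jv_j$ iff $i\equiv j$ modulo both $2n$ and $2m$, i.e.\ iff $i\equiv j\pmod{k}$ with $k=\lcm(2n,2m)=2\lcm(n,m)$, so the $k$ listed edges are pairwise distinct; the indices $i\in[0,k-1]$ with $u_i=w$ form one residue class modulo $2n$, of which there are $k/(2n)=\lcm(n,m)/n$, and they give that many distinct edges of $P_{uv}$ at $w$. Since a residue class modulo $2n$ sits inside one modulo $2$ and $c(u_iv_i)$ depends only on $i\bmod 2$, these edges are monochromatic; call their common colour $\chi_{P_{uv}}(w)$. Finally, for $w=u_a$ and $x=u_b$ in $V(C_1)$ the monotonicity gives $s(w)-s(x)\equiv\delta(a-b)\pmod{2n}$, so $w\equiv x\pmod{2}$ iff $a\equiv b\pmod{2}$ iff $\chi_{P_{uv}}(w)=\chi_{P_{uv}}(x)$; the statement for $V(C_2)$ is identical with $n$ and $m$ interchanged.

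Part (i) is then immediate: $|E_\oplus|=2n\cdot 2m=4mn$ because there is exactly one exterior edge between each cross pair of vertices, the parallel classes partition $E_\oplus$ (Remark~\ref{partition of exterior edges}), and each class has equally many red and blue edges (Remark~\ref{remark parallel class}(iii)), so $G$ has $2mn$ exterior edges of each colour. For (iii), every exterior edge at $w\in V(C_i)$ lies in a unique parallel class, and by (ii) each class contributes exactly $\lambda_i$ such edges (with $\lambda_1=\lcm(n,m)/n$, $\lambda_2=\lcm(n,m)/m$), all of colour $\chi_P(w)$; hence $d_r(w)=\lambda_i\,\lvert\{P:\chi_P(w)=\text{red}\}\rvert$ and $d_r(w)+d_b(w)=\lvert V(C_{3-i})\rvert$. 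Applying (ii) once more: if $x\in V(C_i)$ with $x\equiv w\pmod{2}$ then $\chi_P(x)=\chi_P(w)$ for every $P$, so $d_r(x)=d_r(w)$; and if $x\not\equiv w\pmod{2}$ then $\chi_P(x)\ne\chi_P(w)$ for every $P$, so $d_r(x)=d_b(w)=\lvert V(C_{3-i})\rvert-d_r(w)$ and $d_b(x)=d_r(w)$. This is exactly the displayed identity, and specializing to $x\in\{w^r,w^b\}$ — the two $C_i$-neighbours of $w$, which are $\not\equiv w\pmod{2}$ — yields the first assertion of (iii) with $t=d_r(w)$.

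The main obstacle is the first step of (ii): turning the informal ``same direction'' remark into the airtight congruences modulo $2n$, $2m$ and $2$, while bookkeeping correctly the fact that when $n\ne m$ a vertex of $C_1$ (or $C_2$) is visited several times along the class. Once that is settled, (i) and (iii) are pure counting; the only subtlety there is to note that the per-class contribution $\lambda_i$ and the ``same colour $\iff$ same parity'' dichotomy hold simultaneously for \emph{every} parallel class, so that the sums over classes split cleanly.
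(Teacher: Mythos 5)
Your proposal is correct and follows essentially the same route as the paper: it extracts from Definition~\ref{definition parallel} the facts that the walk of a parallel class advances monotonically around each cycle (so $u_0=u_j$ iff $j\equiv 0\pmod{2n}$ and the colour of $u_jv_j$ depends only on $j\bmod 2$), counts $k/(2n)=\lcm(n,m)/n$ incidences per vertex, and then reads off (i) and (iii) from the partition into parallel classes. The only difference is that you explicitly justify the non-backtracking of the walk, a point the paper absorbs into the definition itself; this is a welcome but not essential elaboration.
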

\begin{proof}
Let $G$ be as in the hypothesis. \\
(i) Clearly, $G$ has $4mn$ exterior edges and, from  Remark \ref{remark parallel class}, there are the same number of red exterior edges as blue exterior edges. Then there are $2mn$ red exterior edges and $2mn$ blue exterior edges.\\
(ii) Recall that from Remark \ref{remark parallel class} (i) $uv\in P_{uv}$. Let $P_{uv}=\{uv=u_0v_0, u_1v_1$,\ldots, $u_{k-1}v_{k-1}\}$, where $k=\lcm(2n,2m)$, as in Definition \ref{definition parallel}. By Definition \ref{definition parallel}, we have the following assertions: 
(a) $c(w_jz_j)=c(w_0z_0)$ iff $j\equiv 0 \pmod{2}$; (b) $k=\lcm(2n,2m)=2\lcm(n,m)$ (so $k$, $2n$ and $2m$ are even numbers); (c) $u_0=u_j$ iff $j\equiv 0 \pmod{l(C_i)}$, where $u=u_0\in V(C_i)$. 
Hence, the edges in $P_{wz}$ incident with $u=u_0$ are $\frac{\lcm(n,m)}{n}$ if $i=1$, $\frac{\lcm(n,m)}{m}$ if $i=2$; and all of them are colored alike.

Clearly (a) and (b) imply the last assertion in (ii).\\
(iii) By Remark \ref{partition of exterior edges}, $\mathcal{P}=\{P_{uv}\ |\ uv\in E_\oplus\}$ is a partition of $E_\oplus$. From (ii) in this lemma, for each $P_{uv}$ the number of edges incident with $w$, $w^r$ and $w^b$ is $\frac{2\lcm(n,m)}{l(C_i)}$ as $w, w^r,w^b \in V(C_i)$, for some  $i\in\{1,2\}$, and $w^r\equiv w^b \pmod{2}$. Hence, the edges in a parallel class incident with $w^r$ and those incident with $w^b$ are colored alike, and those incident with $w$ have a different color, because each parallel class is a sequence of edges with alternating colors and if two edges are consecutive in this sequence, their ends are consecutive vertices in the cycles $C_i$ and $C_{3-i}$.
Therefore, if $d_r(w)=t$ then $d_b(w)=|V(C_{3-i})|-t$, $d_r(x)=d_b(w)$ and $d_b(x)=d_r(w)$ for $x\in \{w^r,w^b\}$.

The last assertion in (iii) follows directly from above, because $C_i$ is alternating, for each $i\in\{1,2\}$.  
\end{proof}

\begin{definition}
Let $\mathcal{F}=\{C_1, C_2\}$ be an alternating cycle factor in a  graph $G$. A vertex $v\in V(C_i)$ is \emph{red-singular} (\emph{blue-singular}) with respect to $C_{3-i}$ if $\{vu\ |\ u\in V(C_{3-i})\}$ is not empty and all the edges in $\{vu\ |\ u\in V(C_{3-i})\}$ are red (blue); $v$ is \emph{singular} with respect to $C_{3-i}$, if it is either red-singular or blue-singular with respect to $C_{3-i}$.

Let $\mathcal{F'}=\{C, H\}$ be a factor in a  graph $G$, where $C$ is an alternating cycle and $H$ is a subgraph. A vertex $v\in V(C)$ is \emph{red-singular} (\emph{blue-singular}) with respect to $H$, if $\{vu\ |\ u\in V(C_{3-i})\}$ is not empty and all the edges in $\{vu\ |\ u\in V(H)\}$ are red (blue); $v$ is \emph{singular} with respect to $H$ if it is either red-singular or blue-singular with respect to $H$.

\end{definition}

Now we will prove a lemma that is useful in the proofs of Propositions \ref{proposition non-singular vertices and no good pair then almost  vertex pancyclic} and \ref{proposition non-singular vertices and no good pair then almost  vertex pancyclic 2} which give sufficient conditions for a graph in the c.g.s. of two Hamiltonian alternating 2-edge-colored graphs to be a vertex alternating-pancyclic graph.

\begin{lemma}
\label{lemma non-singular vertices, two exterior edges of different colors}
Let $G_1$ and $G_2$ be two vertex disjoint graphs with Hamiltonian alternating cycles, $C_1=x_0x_1\cdots x_{2n-1}x_0$ and $C_2=y_0y_1\cdots y_{2m-1}y_0$, respectively, and $G\in G_1\oplus G_2$. If there is no good pair in $G$, and for each $i\in \{1,2\}$, there is a non-singular vertex with respect to $C_{3-i}$ in $C_i$. Then there exist $r\in [0,2m-1]$ and $s\in[0,2n-1]$ such that $x_0y_ry_{r+1}y_{r+2}x_0$ and $y_0x_sx_{s+1}x_{s+2}y_0$ are alternating 4-cycles. (Figure \ref{fig alternating 4-cycle}).
\end{lemma}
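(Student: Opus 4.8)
The plan is to exploit the no-good-pair hypothesis via Lemma~\ref{lemma no good pair} together with a parity/counting argument. First I would fix a non-singular vertex $x_a\in V(C_1)$ with respect to $C_2$; by definition it is incident with both a red exterior edge and a blue exterior edge, so $d_r(x_a)\geq 1$ and $d_b(x_a)\geq 1$, and since $d_r(x_a)+d_b(x_a)=|V(C_2)|=2m$, we have $1\leq d_r(x_a)\leq 2m-1$. I want to produce an index $r$ so that, among the three consecutive vertices $y_r,y_{r+1},y_{r+2}$ of $C_2$, the edges $x_ay_r$, $x_ay_{r+1}$, $x_ay_{r+2}$ alternate in color as $x_0y_ry_{r+1}y_{r+2}x_0$ is traversed; that is, I need $c(x_ay_r)=c(x_ay_{r+2})\ne c(x_ay_{r+1})$ (note $y_ry_{r+1}$ and $y_{r+1}y_{r+2}$ already alternate since $C_2$ is alternating, so I actually need the two ``diagonal'' edges from $x_a$ to behave correctly — I'd double-check the exact alternation condition against the figure, but the point is that a color change must occur at a prescribed spot).

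The key step is the following observation. Walk around $C_2$ reading off the sequence of colors $c(x_ay_0), c(x_ay_1), \ldots, c(x_ay_{2m-1})$ and back to $c(x_ay_0)$. Since not all of these are the same color ($x_a$ is non-singular), this cyclic sequence has at least one color change; in fact, being a cyclic binary sequence that is non-constant, it has an even number $\geq 2$ of color changes. I would argue that wherever such a local color configuration occurs, one gets an alternating $4$-cycle of the desired form $x_ay_ry_{r+1}y_{r+2}x_a$: the two edges of $C_2$ supply one color transition and the two exterior edges from $x_a$ supply the complementary transitions, and because $G$ has no good pair the remaining forced colors cannot conspire to break alternation. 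More carefully, Lemma~\ref{lemma no good pair}(iii) (applied in the ``other direction'', fixing the $C_2$-vertex) pins down how $d_r$ varies along $C_2$, and combined with the alternation of $C_1$ this lets me locate three consecutive $C_2$-vertices realizing the pattern. Symmetrically, using a non-singular vertex $y_b\in V(C_2)$ with respect to $C_1$, the same argument yields $s\in[0,2n-1]$ with $y_0x_sx_{s+1}x_{s+2}y_0$ alternating. Finally I would note that after relabeling the cyclic descriptions of $C_1$ and $C_2$ we may assume $a=0$ and $b=0$, which is exactly the stated form.

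The main obstacle I anticipate is the case analysis on which color the exterior edges take and the careful verification that ``no good pair'' really forces the $4$-cycle to alternate rather than collapsing to a monochromatic or badly-colored cycle — in particular I need to rule out the possibility that every color change in the cyclic sequence $c(x_ay_0),\ldots,c(x_ay_{2m-1})$ sits at a place where the adjacent $C_2$-edge has the ``wrong'' color, which is where the no-good-pair condition (equivalently, the structure of parallel classes in Definition~\ref{definition parallel} and Lemma~\ref{lemma no good pair}(ii)) must be invoked to show such an obstruction cannot persist all the way around the even cycle $C_2$. A secondary, purely bookkeeping point is handling $n\ne m$, where a vertex of $C_1$ may meet several edges of a single parallel class; Lemma~\ref{lemma no good pair}(ii) says these are monochromatic at each vertex, so this does not actually interfere, but I would state it explicitly to keep the parity argument clean.
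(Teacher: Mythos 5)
Your overall plan (fix a non-singular vertex, examine the colors of its exterior edges to consecutive vertices of $C_2$, and use the no-good-pair hypothesis to rule out the bad configurations) points in the right direction, but the step you flag as ``the main obstacle'' is precisely the heart of the proof, and the mechanism you propose for it does not work. Write $\epsilon(j)=c(x_ay_j)$. A short check shows that $x_ay_ry_{r+1}y_{r+2}x_a$ is alternating if and only if $\epsilon(r)\neq c(y_ry_{r+1})$ \emph{and} $\epsilon(r+2)\neq c(y_{r+1}y_{r+2})$; consequently no such $r$ exists exactly when $\epsilon$ is constant on even indices and constant on odd indices. Besides the singular case, this leaves the configuration in which $\epsilon(j)$ depends only on the parity of $j$ and takes both colors. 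That configuration has the \emph{maximum} number of color changes in your cyclic sequence $\epsilon(0),\epsilon(1),\ldots,\epsilon(2m-1)$, so no parity count of color changes around the even cycle $C_2$, and no local inspection at $x_a$, can exclude it. It is excluded only by a global argument: every parallel class (Definition~\ref{definition parallel}) contains an edge incident with $x_a$, and by Lemma~\ref{lemma no good pair}(ii) the color of a class at any $y_h$ is determined by its color at $x_a$ together with the parity of the relevant indices; one then computes that in this configuration \emph{every} exterior edge at $y_h$ gets the same color, i.e.\ $y_h$ (indeed every vertex of $C_2$) is singular, contradicting the hypothesis that $C_2$ contains a vertex that is non-singular with respect to $C_1$. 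This is the paper's argument, and your proposal names the needed ingredient without supplying it, so as written there is a genuine gap.

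A second, smaller issue: the conclusion is about the specific vertices $x_0$ and $y_0$ of the given descriptions, and an alternating $4$-cycle $x_0y_ry_{r+1}y_{r+2}x_0$ forces $x_0$ itself to be non-singular (its two exterior edges in the cycle have different colors). The hypothesis only provides \emph{some} non-singular vertex $x_a$, so proving the statement for $x_a$ and then ``relabeling'' proves a weaker assertion. The paper closes this by first observing that, in the absence of a good pair, Lemma~\ref{lemma no good pair}(ii) propagates non-singularity from one vertex to all vertices of $G$ ($d_r(w)\geq 1$ and $d_b(w)\geq 1$ for every $w$); you should add this step rather than relabel. Finally, your appeal to Lemma~\ref{lemma no good pair}(iii) is not the right tool here: $d_r(y_j)$ counts red exterior edges from $y_j$ to all of $C_1$, so knowing how $d_r$ varies along $C_2$ says nothing about the individual colors $c(x_ay_j)$; the parallel-class statement (ii) is what you need.
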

\begin{proof}
Since there are non-singular vertices $u\in V(C_1)$ with respect to $C_2$ and $v\in V(C_2)$ with respect to $C_1$. 
Then there are red and blue exterior edges incident with $u$ (resp. $v$). Hence, $d_r(u)\geq 1$, $d_b(u)\geq 1$, $d_r(v)\geq 1$, $d_b(v)\geq 1$. Since there is no good pair, by Lemma \ref{lemma no good pair} (ii), $d_r(w)\geq 1$, $d_b(w)\geq 1$ for each $w\in V(C_1)\cup V(C_2)=V(G)$, and thus $C_i$ has no singular vertex with respect to $C_{3-i}$, for each $i\{1,2\}$.

\begin{figure}[!h]
\begin{center}
\includegraphics[scale=0.9]{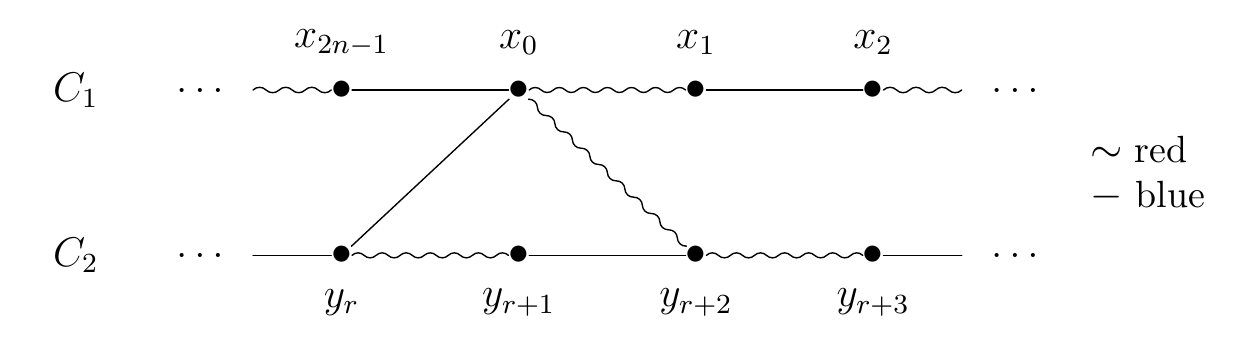}
\caption{An alternating 4-cycle $x_0y_{r'}y_{r}y_{r''}x_0$.} 
\label{fig alternating 4-cycle}
\end{center}
\end{figure}

Proceeding by contradiction, suppose w.l.o.g. that there is no such $r\in[0,2m-1]$ (in the case in which there is no such $s\in [0,2n-1]$ the contradiction arises in a very similar way).

Take $x_0y_h$ an exterior blue edge. Then $y_hy_{h+1}$ or $y_hy_{h-1}$ is red, w.l.o.g. we assume that $y_hy_{h+1}$is red, for a fixed $h\in [0,2m-1]$. 
Our assumption implies that $x_0y_{h+2}$ is blue. Now, take $x_0y_{h+2}$, arguing as before, we get that $x_0y_{h+4}$ is blue. Continuing this way, we conclude that  $x_0y_{h+2j}$ is blue for each $j\in [0,m-1]$. Since $C_i$ has no singular vertices with respect to $C_{3-i}$, there is a  $x_0y_{h+2t+1}$ red exterior edge, for some $t\in[0,m-1]$. Since $y_{h+2t+1}y_{h+2t+2}$ is blue, arguing as above we get that $x_0y_{h+2t+1}$ is red for each $t\in[0,m-1]$. 

Recall that $G$ has no good pair. Therefore, by Lemma \ref{lemma no good pair} (ii) all edges parallel to $x_0y_{h+2j}$  having $y_h$ as an end are blue and those parallel to $x_0y_{h+2j+1}$ having $y_h$ as an end are blue, for each $j\in [0,m-1]$. So we conclude that $y_h$ is a blue-singular vertex, a contradiction. 

\end{proof}

In the next two propositions, for a graph $G$ in the c.g.s. of tho Hamiltonian alternating 2-edge-colored graphs, $G_1$ and $G_2$, which contains no good pair, we will construct, for each $v\in V(G)$,  alternating cycles of each even length passing through $v$.

\begin{notation}
Let $k$ be a positive integer and $A$ be a set of non-negative integers. We will denote by $kA$ to the set of products $\{ka\ |\ a \in A\}$.
\end{notation}

\begin{proposition}
\label{proposition non-singular vertices and no good pair then almost  vertex pancyclic}
Let $G_1$ and $G_2$ be two vertex disjoint graphs with Hamiltonian alternating cycles, $C_1=x_0x_1\cdots x_{2n-1}x_0$ and $C_2=y_0y_1\cdots y_{2m-1}y_0$, respectively, and $G\in G_1\oplus G_2$. If there is no good pair in $G$, and for each $i\in \{1,2\}$, in $C_i$  there is a non-singular vertex with respect to $C_{3-i}$. Then for each vertex $v\in V(G)$ there is an alternating cycle  of each even length in $[4,4m_1]\cup [2M_2, 2n+2m]$, where $m_1=\min\{n,m\}$ and $M_2=\max\{n,m\}$, passing through $v$.
\end{proposition}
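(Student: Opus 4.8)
The plan is the following. By the symmetry of the statement under interchanging $C_1$ and $C_2$ (the numbers $m_1$, $M_2$ and $2n+2m$ are unaffected), it suffices to treat the case $v\in V(C_1)$, and after cyclically relabelling $C_1$ we may assume $v=x_0$. I would first record that the hypothesis is, via Lemma~\ref{lemma no good pair}, much stronger than it appears: since $C_1$ (resp. $C_2$) contains a vertex that is non-singular with respect to $C_2$ (resp. $C_1$), that vertex has positive red and positive blue exterior degree, and Lemma~\ref{lemma no good pair}(ii)--(iii) then force $d_r(w)\ge 1$ and $d_b(w)\ge 1$ for \emph{every} $w\in V(G)$. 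In particular both hypotheses of Lemma~\ref{lemma non-singular vertices, two exterior edges of different colors} hold, and after relabelling $C_2$ it supplies an alternating $4$-cycle $x_0y_0y_1y_2x_0$, settling the length-$4$ case and pinning the colours of the exterior edges $x_0y_0$ and $x_0y_2$.

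The remainder is a single construction, applied twice. Given an arc $A=x_ix_{i+1}\cdots x_{i+a-1}$ of $C_1$ (possibly with $a=1$) that contains $x_0$ and an arc $B=y_jy_{j+1}\cdots y_{j+b-1}$ of $C_2$, one joins the two ends of $A$ to the two ends of $B$ by exterior edges and obtains a cycle through $x_0$ on $V(A)\cup V(B)$ of length $a+b$. Since $A$ and $B$ inherit proper colourings from $C_1$ and $C_2$, this cycle is alternating exactly when the (at most) four seam edges are colour-incompatible with their adjacent arc-edges and with each other. The key observation is that, because $C_1$ and $C_2$ are properly coloured, the colour required of each seam edge depends only on the parities of the indices of the two ends it joins; hence the requirements at the two seams can be made mutually consistent by a suitable choice of the parities of $i$ and $j$, and what remains is to guarantee that $x_i$ and $x_{i+a-1}$ (or, when $a=1$, the single vertex $x_0$) possess exterior edges of the required colours to the required ends of $B$.

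Securing those seam edges is the crux, and here the whole of Lemma~\ref{lemma no good pair} is needed: part~(ii) says every parallel class meets each vertex of $C_1$ in the same number of equally-coloured edges, the colour flipping with the parity of the index, and part~(iii) yields the identity $d_r(x_{i+1})=2m-d_r(x_i)=d_b(x_i)$. Taking the alternating $4$-cycle of Lemma~\ref{lemma non-singular vertices, two exterior edges of different colors} at a suitably chosen vertex as a seed (it certifies a pair of oppositely-coloured exterior edges at a single vertex, with colours tied to the local colouring of the opposite cycle) and sliding $A$ along $C_1$ and $B$ along $C_2$ while respecting the parity constraints above, one lands at positions where both seam edges have the colours they must. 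I would then run this construction in two regimes. For even $2\ell\in[2M_2,2n+2m]$, take $A$ to be all of $C_1$ with one edge deleted and $B$ an arc of $C_2$ on $2\ell-2n$ vertices (the desired cycle being $C_1$ itself when $\ell=n$); since $x_0\in V(C_1)$, every such cycle passes through $x_0$. For even $2\ell\in[4,4m_1]$, take $A$ and $B$ with $|V(A)|+|V(B)|=2\ell$; this can be arranged with $|V(B)|\le 2m$ and $|V(A)|\le 2n$ precisely because $2\ell\le 4m_1\le 2n+2m$.

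I expect the main obstacle to be precisely the coordinated existence of the seam edges: the parity bookkeeping across the parallel classes of Definition~\ref{definition parallel} and Remark~\ref{remark parallel class} is delicate, and a few extreme configurations need separate handling --- $\ell=2$ is the $4$-cycle already obtained; $\ell=n+m$ is the Hamiltonian case, in which $B$ is all of $C_2$, both seams land on $C_2$, and one must verify that the splice is not obstructed (it is not, for an obstruction would be a good pair, which the hypothesis forbids); and when $n=m$ certain arcs degenerate to a whole cycle or to a single vertex, which the construction still accommodates. Throughout, the alternating cycles are produced explicitly, in line with the paper's closing remark.
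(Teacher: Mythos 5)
Your setup (reduction to $v=x_0$, the seed $4$-cycle from Lemma~\ref{lemma non-singular vertices, two exterior edges of different colors}, and the two-arc splice for the short lengths) matches the paper, whose $\alpha$-cycles are exactly your construction with the arc of $C_2$ two vertices longer than the arc of $C_1$. But your plan for the range $[2M_2,\,2n+2m]$ --- delete one edge of $C_1$ and insert a single arc of $C_2$ on $2\ell-2n$ vertices --- breaks down, and at the top end it is provably impossible. Write out the alternation conditions for that splice when $B$ is all of $C_2$: you need $c(x_iy_j)=c(y_{j-1}x_{i+1})=c(x_ix_{i+1})$ and $c(y_{j-1}y_j)=c(x_ix_{i+1})$, which says precisely that $x_iy_j,\ x_i^{\chi}y_j^{\chi}$ (with $\chi=c(x_ix_{i+1})$) is a good pair. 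So a single-splice Hamiltonian alternating cycle exists \emph{if and only if} $G$ has a good pair --- the hypothesis forbids the very configuration your construction needs, rather than merely removing an obstruction as you assert. The paper gets the Hamiltonian cycle (and all lengths $2n+2,\dots,2n+2m$) with the $\gamma_h^t$ cycles, which enter and leave $C_2$ repeatedly, replacing $h+1$ single edges $x_{t+2j}x_{t+2j+1}$ of $C_1$ by three-edge detours $x_{t+2j}\,y_{r+t+2j+2}\,y_{r+t+2j+1}\,x_{t+2j+1}$; each such detour adds two vertices and uses only seam edges whose colours are already known.

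The deeper gap is your ``sliding'' argument for coordinating the seams. The hypothesis pins down the colours of only \emph{two} parallel classes, namely $P_{x_0y_r}$ and $P_{x_0y_{r+2}}$, i.e.\ the edges $x_ty_{r+t}$ and $x_ty_{r+t+2}$; when $\gcd(n,m)>1$ there are $2\gcd(n,m)$ classes in all, and the colours of the remaining ones are not determined by Lemma~\ref{lemma no good pair} (the degree identities in part~(iii) constrain counts, not individual edges). Consequently the only exterior edges you may legitimately use as seams join $x_i$ to $y_{r+i}$ or $y_{r+i+2}$, which forces the arc of $C_2$ in a single splice between $x_i$ and $x_{i+1}$ to have $b-1\in\{\pm1,\pm3\}$ vertices' worth of offset --- and a short computation with the determined colours then rules out even $b=4$. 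So ``landing at positions where both seam edges have the colours they must'' is not achievable for general arc lengths; every cycle has to be assembled exclusively from the two determined classes, which is exactly why the paper needs the three specific families $\alpha_h^t$, $\beta^t$ and $\gamma_h^t$ rather than a free choice of arc decomposition.
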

\begin{proof}
Suppose w.l.o.g. that $n\geq m$.

By Lemma \ref{lemma non-singular vertices, two exterior edges of different colors}, there exists $r\in [0,2m-1]$ such that $x_0y_{r}y_{r+1}y_{r+2}x_0$ is an alternating 4-cycle.
 
Assume w.l.o.g. that $x_0y_r$ is blue, then $y_ry_{r+1}$ and $x_0y_{r+2}$ are red and $y_{r+1}y_{r+2}$ is blue. 
As $C_2$ is an alternating cycle $y_{r+2j}y_{r+2j+1}$ is red for each $j\in[0,m-1]$ and $y_{r+2j+1}y_{r+2j+2}$ is blue for each $j\in[0,m-1]$. 

We may assume $x_0x_1$ is red. Since $G$ has no good pair, the next assertions follow directly from Definition \ref{definition parallel} and Lemma \ref{lemma no good pair} (ii):

\begin{itemize}
\itemsep-0.2em 
\item $x_{2n-2j}y_{r-2j}$ is an exterior blue edge for each $j \in[0,l-1]$, where $l=\lcm(n,m)$, (\emph{i.e.} $x_{2j}y_{r+2j}$ is an exterior blue edge for each $j \in[0,l-1]$).
\item $x_{2n-2j-1}y_{r-2j-1}$ is an exterior red edge for each $j \in[0,l-1]$, where $l=\lcm(n,m)$, (\emph{i.e.} $x_{2j+1}y_{r+2j+1}$ is an exterior red edge for each $j \in[0,l-1]$).
\item $x_0y_{r+2}$ is red.
\item $x_{2j}y_{r+2j+2}$ is red for each $j\in[0,l-1]$.
\item $x_{2j+1}y_{r+2j+3}$ is blue for each $j\in[0,l-1]$.\\
(Figure \ref{fig plenty alternating 4-cycles}.)
\end{itemize} 
\begin{figure}[!h]
\begin{center}
\includegraphics[width=\textwidth]{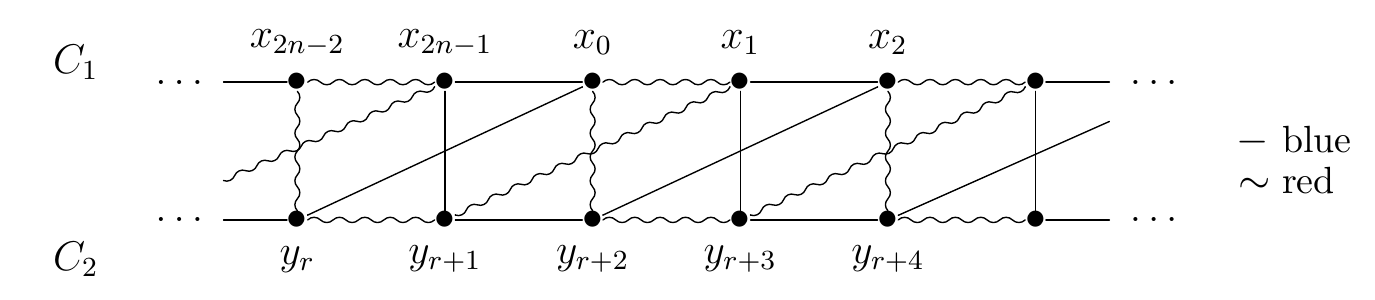}
\caption{Parallel classes of $x_0y_r$ and $x_0y_{r+2}$.} 
\label{fig plenty alternating 4-cycles}
\end{center} 
\end{figure}

\begin{sloppypar}
Let $t\in[0,2n-1]$ and $h\in[0,2m-3]$. Consider the cycle $\alpha_{h}^t=x_ty_{r+t}y_{r+t+1}\cdots y_{r+t+h+2}x_{t+h}x_{t+h-1}\cdots x_t$. 
\end{sloppypar}

\begin{claim}
The cycle $\alpha_{h}^t$ is alternating for each $t\in[0,2n-1]$ and $h\in[0,2m-3]$.

We have four possibilities for the parities of $t$ and $h$.
\begin{sloppypar}
\begin{enumerate}
\itemsep-0.2em 
\item[{Case 1}]: $t$ is even.
Then $x_ty_{r+t}$ is blue, $x_ty_{r+t+2}$ is red, $y_{r+t}y_{r+t+1}$ is red.
\item[{Case 1.1}]: $h$ is even. 
Then $y_{r+t}y_{r+t+1}\cdots y_{r+t+h}$ is an alternating path whose first edge is red and its last edge is blue, 
$x_{t+h}y_{r+h+2}$ is red and  
$x_{t+h}x_{t+h-1}\cdots x_t$ is an alternating path whose first edge is blue and its last edge is red.
\item[{Case 1.2}]: $h$ is odd. 
Then $y_{r+t}y_{r+t+1}\cdots y_{r+t+h}$ is an alternating path whose first and last edges are red, 
$x_{t+h}y_{r+h+2}$ is blue and 
$x_{t+h}x_{t+h-1}\cdots x_t$ is an alternating path whose first and last edges are red.
\item[{Case 2}]: $t$ is odd. 
Then $x_ty_{r+t}$ is red, $x_ty_{r+t+2}$ is blue, $y_{r+t}y_{r+t+1}$ is blue.
\item[{Case 2.1}]:  $h$ is even.
Then $y_{r+t}y_{r+t+1}\cdots y_{r+t+h}$ is an alternating path whose first edge is blue and its last edge is red, 
$x_{t+h}y_{r+h+2}$ is blue and
$x_{t+h}x_{t+h-1}\cdots x_t$ is an alternating path whose first edge is red and its last edge is blue.
\item[{Case 2.2}]: $h$ is odd.
Then $y_{r+t}y_{r+t+1}\cdots y_{r+t+h}$ is an alternating path whose first and last edges are blue, 
$x_{t+h}y_{r+h+2}$ is red and  
$x_{t+h}x_{t+h-1}\cdots x_t$ is an alternating path whose first and last edges are blue.
\end{enumerate}
 \end{sloppypar}
Hence, the cycle $\alpha_{h}^t$ is alternating for each $t\in[0,2n-1]$ and each $h\in[0, 2m-3]$.
\end{claim}
The cycle $\alpha_{h}^t$ has length $l(\alpha_{h}^t)=1+(h+2)+1+h=2h+4$ for each $h\in [0,2m-3]$. Therefore, there exists an alternating cycle of each even length in $[4,\ 4m]$ passing through $x_t$ and $y_{r+t}$, for each $t\in[0,2n-1]$.

Let $t\in [0,2n-1]$. The cycle $\beta^t=x_ty_{r+t+2}x_{t+2}x_{t+3}\cdots x_t$ is alternating since it is obtained from $C_1$ by exchanging the alternating subpath $x_tx_{t+1}x_{t+2}$ of $C_1$ by the path $x_ty_{r+t+2}x_{t+2}$; notice that $c(x_tx_{t+1})=c(x_ty_{r+t+2})$ and $c(x_{t+1}x_{t+2})=c(y_{r+t+2}x_{t+2})$. 

Therefore, for each $t\in [0,2n-1]$, $\beta^t$ is an alternating cycle of length $2n$ containing $x_t$ and $y_{r+t+2}$. Hence, for each vertex $w\in V(G)$ there exists an alternating cycle of length $2n$ passing through $w$.\\
 
\begin{sloppypar}
Finally, let $t\in 2[0,n-1]$ and $h\in[0,m-1]$. Now consider the cycle 
$\gamma_{h}^t=x_ty_{r+t+2}y_{r+t+1}x_{t+1} x_{t+2}y_{r+t+4}y_{r+t+3}x_{t+3}\cdots x_{t+2h}y_{r+t+2h+2}y_{r+t+2h+1}x_{t+2h+1}$ $x_{t+2h+2} C_1 x_t$ 
\end{sloppypar}

\begin{claim}
The cycle $\gamma_{h}^t$ is alternating for each $t\in 2[0,n-1]$ and $h\in[0,m-1]$.
\begin{sloppypar}
As $t$ is even $x_{t+2j}y_{r+t+2j+2}$ is red, $y_{r+t+2j+2} y_{r+t+2j+1}$ is blue, $y_{r+t+2j+1}x_{t+2j+1}$ is red and $x_{t+2j+1}x_{t+2j+2}$ is blue for each $j \in [0,m-1]$ and $x_{t-1}x_t$ is blue. Therefore, $\gamma_h^t$ is alternating  for each $t\in 2[0,n-1]$ and $h\in[0,m-1]$.
(Figure \ref{fig big alternating cycle}.)
\end{sloppypar}
\end{claim}
\begin{figure}[!h]
\begin{center}
\includegraphics[width=\textwidth]{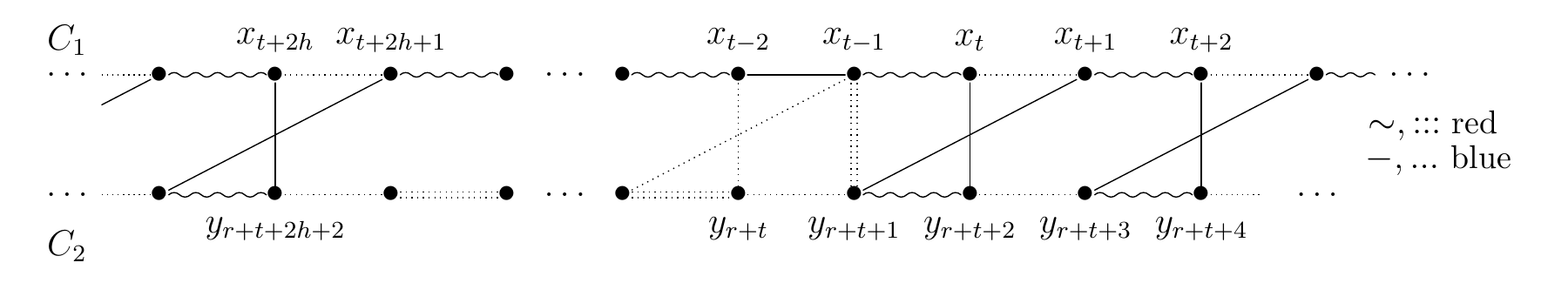}
\caption{An alternating $\gamma_h^t$ cycle in $G$.} 
\label{fig big alternating cycle}
\end{center} 
\end{figure}
 
Now, the cycle $\gamma_h^t$ has length $l(\gamma_h^t)=2n+2h+2$ for each $h\in [0,m-1]$ and 
thus we have alternating cycles of each even length in $[2n+2,\ 2n+2m]$  passing through $x_t$, $y_{r+t+1}$ and $y_{r+t+s}$ for each $t\in[0,2n-1]$. Since $n\geq m$ we have that for each $w\in V(G)$ there exist an alternating cycle of each even length in $[2n+2,\ 2n+2m]$ passing through $w$.

\end{proof}

\begin{proposition}
\label{proposition non-singular vertices and no good pair then almost  vertex pancyclic 2}
Let $G_1$ and $G_2$ be two vertex disjoint graphs with Hamiltonian alternating cycles, $C_1=x_0x_1\cdots x_{2n-1}x_0$ and $C_2=y_0y_1\cdots y_{2m-1}y_0$, respectively, and $G\in G_1\oplus G_2$. If there is no good pair in $G$, and for each $i\in \{1,2\}$, in $C_i$  there is a non-singular vertex with respect to $C_{3-i}$. Then for each vertex $v\in V(G)$ there is an alternating cycle  of each even length in $[4m_1,\ 2M_2]$, where $m_1=\min\{n,\ m\}$ and $M_2=\max\{n,\ m\}$, passing through $v$.
\end{proposition}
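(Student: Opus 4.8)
The interval $[4m_1,2M_2]$ is exactly the gap left open by Proposition~\ref{proposition non-singular vertices and no good pair then almost  vertex pancyclic}, so the two results together will give an alternating cycle of every even length in $[4,2n+2m]$ through every vertex. The plan is to reduce to the configuration already fixed in the previous proof. As there, I may assume $n=M_2\ge m=m_1$; if $n\le 2m$ then $[4m,2n]$ is empty, except when $n=2m$, in which case it is the single value $2n$, and an alternating cycle of length $2n$ through every vertex has already been produced (the cycles $\beta^t$). So I will assume $n>2m$. By Lemma~\ref{lemma non-singular vertices, two exterior edges of different colors} there is an alternating $4$-cycle $x_0y_ry_{r+1}y_{r+2}x_0$; fixing its colours (say $x_0y_r$ blue) and $x_0x_1$ red, Definition~\ref{definition parallel} and Lemma~\ref{lemma no good pair}(ii) determine the colours of the two parallel classes $P_{x_0y_r}$ and $P_{x_0y_{r+2}}$: every exterior edge of $P_{x_0y_r}$ at $x_i$ is blue iff $i$ is even, every exterior edge of $P_{x_0y_{r+2}}$ at $x_i$ is red iff $i$ is even, and $x_ix_{i+1}$, $y_{r+i}y_{r+i+1}$ are red iff $i$ is even; moreover each $x_i$ is joined to $\lcm(n,m)/n$ vertices of $C_2$ by edges of each of these two classes, all of one colour.

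For a prescribed even $\ell\in[4m,2n]$ and a prescribed $v\in V(G)$ I would build an alternating $\ell$-cycle through $v$ from the same three ingredients as in the previous proof, but combined so as to \emph{shorten} $C_1$: (i) a \emph{zigzag} between $C_1$ and $C_2$, alternately running along two consecutive vertices of $C_1$ and two consecutive vertices of $C_2$, as in $\gamma^t_h$, whose length can be tuned in steps of $2$; (ii) one straight arc of $C_1$; and (iii) \emph{generalized shortcuts}, i.e.\ iterates of the move replacing $x_sx_{s+1}x_{s+2}$ by $x_sy_{r+s+2}x_{s+2}$ (as in $\beta^t$) in which the middle edge jumps across a whole block of $C_1$ using the longer edges of $P_{x_0y_r}$ and $P_{x_0y_{r+2}}$; such a shortcut deletes a block of vertices of $C_1$ while adding only one vertex of $C_2$. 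For each $\ell$ I would first pick a starting cycle from the toolkit of the previous proof whose length has the right residue modulo $2\gcd(n,m)$ --- a cycle $\beta^t$ of length $2n$ when $\ell\equiv 0\pmod{2\gcd(n,m)}$, and a zigzag $\gamma^t_h$ of length $2n+2h+2$ with $2h+2\equiv\ell\pmod{2\gcd(n,m)}$ otherwise --- and then apply one or two generalized shortcuts along its straight $C_1$-portion to cut the length down to exactly $\ell$. The base point $t$, the value of $h$ within its residue class, and the positions of the shortcuts are chosen so that $v$ survives on the resulting cycle; since the construction is invariant under the rotation $x_i\mapsto x_{i+c}$, $y_j\mapsto y_{j+c}$, this is possible for every $v$, the cases $v\in V(C_1)$ and $v\in V(C_2)$ being treated separately (for the latter, using the symmetric alternating $4$-cycle $y_0x_sx_{s+1}x_{s+2}y_0$ of Lemma~\ref{lemma non-singular vertices, two exterior edges of different colors}). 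Exactly as in the Claims of the previous proof, one then checks block by block that consecutive edges of the cycle receive different colours.

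The hard part will be the parameter count, not the alternation checks. A single generalized shortcut changes the length only by a multiple of $2\gcd(n,m)$, because each controlled exterior edge forces a congruence modulo $2\gcd(n,m)$ between the indices of its two endpoints; hence no amount of shortcutting a fixed starting cycle can reach lengths outside one residue class modulo $2\gcd(n,m)$. The escape is precisely that the zigzag cycles $\gamma^t_h$ realize \emph{every} even residue modulo $2\gcd(n,m)$ as $h$ runs over $[0,m-1]$, and that after enough shortcuts such a cycle can be brought below $4m$ --- which requires enough vertices of $C_2$ to have been left unused by the zigzag, and so forces $h\le m-1$ together with a small separate argument at the extreme residue (covered by the $\beta^t$ cycles, whose zigzag is empty). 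One must also check that the $C_2$-indices occurring never wrap around $C_2$, so that no vertex is repeated, and carry out the (routine but lengthy) parity case analysis matching colours at each splice point. I expect the delicate point to be exhibiting shortcut sizes which, together with the chosen starting cycle, hit every even $\ell\in[4m,2n]$ exactly; once that combinatorial bookkeeping is done the verification that each candidate cycle is alternating is mechanical given the parallel-class colouring.
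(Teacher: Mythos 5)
Your plan correctly isolates the one structural fact that makes this proposition work: because a parallel class puts $\lcm(n,m)/m=n/\gcd(n,m)$ same-coloured exterior edges at each vertex of $C_2$, a single vertex $y_0$ sees a lattice of $C_1$-vertices spaced $2\gcd(n,m)$ apart, and this is what lets one realize lengths strictly between $4m$ and $2n$. But as written the proposal is a strategy, not a proof: the entire content of the proposition --- exhibiting, for each even $\ell\in[4m,2n]$ and each $v$, a concrete alternating cycle --- is deferred to ``combinatorial bookkeeping'' that you yourself flag as ``the hard part'' and ``the delicate point.'' Several of the deferred issues are genuinely nontrivial in your scheme: the endpoints of a generalized shortcut must land on the $2d$-spaced lattice \emph{and} carry the right colours at both splice points; the $C_2$-vertex used by each shortcut must be disjoint from the zigzag's $C_2$-block and from the other shortcut's vertex; and at the extreme $\ell=4m$ with $h=m-1$ the straight $C_1$-portion of $\gamma^t_h$ is exactly exhausted, so the feasibility inequality is tight and needs a separate check. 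None of this is carried out, so there is a genuine gap.

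It is also worth noting that the iterated-shortcut architecture is heavier than necessary. The paper proves the proposition with a single direct construction: starting from the alternating $4$-cycle $y_0x_sx_{s+1}x_{s+2}y_0$ of Lemma \ref{lemma non-singular vertices, two exterior edges of different colors} (the one anchored in $C_2$, not the one anchored in $C_1$ that you reuse from Proposition \ref{proposition non-singular vertices and no good pair then almost  vertex pancyclic}), it deduces that $y_0x_{s+2dj}$ is blue for all $j\in[0,n/d-1]$ with $d=\gcd(n,m)$, and then takes the cycle $\xi(r,h,t)$ consisting of one exterior edge, an arc of $C_1$ of length $2dr+h$, one exterior edge, and an arc of $C_2$ of length $h$. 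Its length $2+2dr+2h$ sweeps every even value in $[2+2d,2n]\supseteq[4m,2n]$ as $(r,h)$ varies, with no iteration, no disjointness worries between shortcuts, and vertex coverage handled simply by letting $t$ run over $[0,\lcm(n,m)-1]$ and shifting parity. If you want to salvage your route, the cleanest fix is to collapse your ``zigzag plus shortcuts'' into exactly this two-arc cycle: one long arc on $C_1$ whose length you tune modulo $2d$ by its offset $h$ and in multiples of $2d$ by $r$, closed off through a short arc of $C_2$.
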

\begin{proof}
Suppose w.l.o.g. that $n\geq m$.

By Lemma \ref{lemma non-singular vertices, two exterior edges of different colors}, there exists $s\in [0,2n-1]$ such that $y_0x_sx_{s+1}x_{s+2}y_0$ is an alternating 4-cycle.
 
Assume w.l.o.g. that $y_0x_s$ is blue, then $x_sx_{s+1}$ and $y_0x_{s+2}$ are red and $x_{s+1}x_{s+2}$ is blue. 
As $C_1$ is an alternating cycle $x_{s+2i}x_{s+2i+1}$ is red for each $i\in[0,n-1]$ and $x_{s+2i+1}x_{s+2i+2}$ is blue for each $i\in[0,n-1]$. 

We may assume $y_0y_1$ is red. Since $G$ has no good pair, the next assertions follow directly from Definition \ref{definition parallel} and Lemma \ref{lemma no good pair} (ii):
\begin{itemize}
\itemsep-0.2em
\item $y_{2m-2j}x_{s-2j}$ is an exterior blue edge for each $j \in[0,l-1]$, where $l=\lcm(n,m)$, (\emph{i.e.} $y_{2j}x_{s+2j}$ is an exterior blue edge for each $j \in[0,l-1]$).
\item $y_{2m-2j-1}x_{s-2j-1}$ is an exterior red edge for each $j \in[0,l-1]$, where $l=\lcm(n,m)$, (\emph{i.e.} $y_{2j+1}x_{s+2j+1}$ is an exterior red edge for each $j \in[0,l-1]$).
\item $y_0x_{s+2}$ is red.
\item $y_{2j}x_{s+2j+2}$ is red for each $j\in[0,l-1]$.
\item $y_{2j+1}x_{s+2j+3}$ is blue for each $j\in[0,l-1]$.
\end{itemize} 

Moreover, since $y_{2j}x_{s+2j}$ is an exterior blue edge for each $j \in[0,l-1]$ we have, in particular, that $y_{2mj}x_{s+2mj}$ is an exterior blue edge for each $j\in[0,\frac{l}{m}-1]$, where $y_{2mj}=y_0$ as $2mj\equiv 0 \pmod{2m}$. 

Consider $d=\gcd(n,m)$, then $l/m=\left(\frac{nm}{d}\right)/m=n/d=n'$. Hence, $y_{0}x_{s+2mj}$ is an exterior blue edge for each $j\in[0,n'-1]$.

Observe that the sets of indices $\{s+2mj\}_{j=0}^{n'-1}$ and $\{s+2dj\}_{j=0}^{n'-1}$ are equal modulo $2n$ and thus $y_{0}x_{s+2dj}$ is an exterior blue edge for each $j\in[0,n'-1]$.

Then, again by Definition \ref{definition parallel} and Lemma \ref{lemma no good pair} (ii), we have that $y_{2i}x_{s+2dj+2i}$ is an exterior blue edge for each $i\in[0,l-1]$ and each $j\in[0,n'-1]$ and $y_{2i+1}x_{s+2dj+2i+1}$ is an exterior red edge for each $i\in[0,l-1]$ and each $j\in[0,n'-1]$.

Let $r\in [1,n'-1]$, $h\in[0,d-1]$ and $t\in[0,l-1]$. Consider the cycle 
$\xi(r,h,t)=y_{2t}x_{s+2t}x_{s+2t+1}\cdots x_{s+2t+2dr+h} y_{2t+h}y_{2t+h-1}\cdots y_{2t}$. 

\begin{claim}
$\xi(r,h,t)$ is alternating for each $r\in [1,n'-1]$, each $h\in[0,d-1]$ and each $t\in[0,l-1]$.

We know that the edge $y_{2t}x_{s+2t}$ is blue. As $x_{s+2i}x_{s+2i+1}$ is red for each $i\in[0,n-1]$ and $x_{s+2i+1}x_{s+2i+2}$ is blue for each $i\in[0,n-1]$, it follows that the alternating path $x_{s+2t}x_{s+2t+1}\cdots x_{s+2t+2dr+h}$ starts at a red edge and, depending on the parity of $h$, it ends at a blue edge whenever $h$ is even and it ends at a red edge whenever $h$ is odd.  

Moreover, since $y_{2j}x_{s+2j+2}$ is red for each $j\in[0,l-1]$ and $y_{2j+1}x_{s+2j+3}$ is blue for each $j\in[0,l-1]$; we have that $x_{s+2t+2dr+h} y_{2t+h}$ is red whenever $h$ is even and it is blue whenever $h$ is odd.

Observe that, as $y_0y_1$ is red, the edges in $C_2$ satisfy $y_{2i}y_{2i+1}$ is red for each $i\in[0,m-1]$ and $y_{2i+1}y_{2i+2}$ is blue for each $i\in[0,m-1]$. Hence, the alternating path $y_{2t+h}y_{2t+h-1}\cdots y_{2t}$, ends at a red edge and starts at a blue edge if $h$ is even and at a red edge it $h$ is odd.

Therefore, $\xi(r,h,t)$ is alternating for each $r\in [1,n'-1]$, each $h\in[0,d-1]$ and each $t\in[0,l-1]$.
\end{claim}
The cycle $\xi(r,h,t)$ has length $l(\xi(r,h,t))=1+(2dr+h)+1+h=2+2dr+2h$ for each $r\in [1,n'-1]$ and each $h\in[0,d-1]$ and thus we have alternating cycles of each even length in $[2+2d,\ 2+2d(n'-1)+2(d-1)]=[2+2d,\ 2n]$ passing through $y_{2t}$ and $x_{s+2t}$, for each $t\in[0,l-1]$.

Now, take $r\in [1,n'-1]$, $h\in[0,d-1]$ and $t\in[0,l-1]$ and consider the cycle 
$\xi(r,h,t+1)=y_{2t+1}x_{s+2t+1}x_{s+2t+2}\cdots x_{s+2t+1+2dr+h} y_{2t+1+h}y_{2t+h}\cdots y_{2t+1}$.

In a similar way it can be proved that $\xi(r,h,t+1)$ is an alternating cycle of length $2+2dr+2h$ passing through $y_{2t+1}$ and $x_{s+2t+1}$, for each $t\in[0,l-1]$.

Therefore, for each vertex $w\in V(G)$ and each even length $l$ in $[2+2d,\ 2n]$, there exists  an alternating cycle in $G$ of length $l$ passing through $w$. 

Given that $d=\gcd(n,m)$ we have that $2+2d\leq 4m$, and thus we have the result.
\end{proof}

As a consequence of Propositions \ref{proposition non-singular vertices and no good pair then almost  vertex pancyclic} and \ref{proposition non-singular vertices and no good pair then almost  vertex pancyclic 2} we have the next result:

\begin{theorem}
\label{theorem non-singular vertices and no good pair then vertex pancyclic 2 graphs}
Let $G_1$ and $G_2$ be two vertex disjoint graphs with Hamiltonian alternating cycles, $C_1=x_0x_1\cdots x_{2n-1}x_0$ and $C_2=y_0y_1\cdots y_{2m-1}y_0$, respectively, and $G\in G_1\oplus G_2$. If there is no good pair in $G$, and for each $i\in \{1,2\}$, in $C_i$  there is a non-singular vertex with respect to $C_{3-i}$. Then $G$ is vertex alternating-pancyclic.
\end{theorem}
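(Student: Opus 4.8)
The plan is to obtain the theorem as a direct concatenation of the two preceding propositions, both of which are stated under exactly the hypotheses of the theorem (no good pair in $G$, and a non-singular vertex of $C_i$ with respect to $C_{3-i}$ for each $i\in\{1,2\}$). First I would assume without loss of generality that $n\ge m$, so that $m_1=\min\{n,m\}=m$ and $M_2=\max\{n,m\}=n$; note $|V(G)|=|V(C_1)|+|V(C_2)|=2n+2m$, and that since $C_1$ and $C_2$ are genuine simple cycles we have $n\ge m\ge 2$, so all intervals below are non-empty and begin at $4$.

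Next, fix an arbitrary $v\in V(G)$. Proposition~\ref{proposition non-singular vertices and no good pair then almost  vertex pancyclic} provides an alternating cycle through $v$ of every even length in $[4,4m]\cup[2n,2n+2m]$, and Proposition~\ref{proposition non-singular vertices and no good pair then almost  vertex pancyclic 2} provides an alternating cycle through $v$ of every even length in $[4m,2n]$ (this last interval being empty, hence its contribution vacuous, precisely when $2m>n$). I would then check the elementary set identity
\[
[4,4m]\cup[4m,2n]\cup[2n,2n+2m]=[4,2n+2m].
\]
If $4m\le 2n$, the first two intervals merge into $[4,2n]$ and adjoining the third yields $[4,2n+2m]$. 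If $4m>2n$, the middle interval is empty, but then $2n\le 4m$ forces $[4,4m]$ and $[2n,2n+2m]$ to overlap, and since $n\ge m$ gives $2n+2m\ge 4m$, their union is again $[4,2n+2m]$. Restricting to even integers, this shows that for every $v\in V(G)$ and every even $\ell$ with $4\le\ell\le 2n+2m$ there is an alternating cycle of length $\ell$ through $v$.

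Finally, since every alternating cycle has even length and a simple graph has no cycle of length $2$, the lengths that must be realized through each vertex for vertex alternating-pancyclicity are exactly $\{4,6,\dots,2(n+m)\}=\{2k : k\in\{2,\dots,n+m\}\}$, which is precisely what the previous paragraph supplies; hence $G$ is vertex alternating-pancyclic. No step here is a genuine obstacle: the proof is essentially bookkeeping, and the only point requiring a moment's care is the interval arithmetic in the regime $2m>n$, where Proposition~\ref{proposition non-singular vertices and no good pair then almost  vertex pancyclic 2} contributes nothing and one must notice that the two pieces coming from Proposition~\ref{proposition non-singular vertices and no good pair then almost  vertex pancyclic} already overlap and cover $[4,2n+2m]$ on their own.
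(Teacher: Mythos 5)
Your proposal is correct and is essentially the paper's own argument: the paper derives Theorem~\ref{theorem non-singular vertices and no good pair then vertex pancyclic 2 graphs} directly as the union of the length ranges supplied by Propositions~\ref{proposition non-singular vertices and no good pair then almost vertex pancyclic} and~\ref{proposition non-singular vertices and no good pair then almost vertex pancyclic 2}, exactly as you do. Your explicit check of the interval arithmetic (including the regime $2m>n$, where the second proposition's interval is empty but the two pieces from the first already overlap) is a correct and slightly more careful write-up of the same bookkeeping.
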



Consider the following construction: Given a bipartite complete 2-edge-colored graph $G$, with partition $(X, Y)$, he constructs a complete 2-edge-colored graph $H$ from $G$ by adding all edges between vertices in $X$ with red color and all edges between vertices in $Y$ with blue color. This is, the induced subgraph of $H$ by $X$ is a complete red monochromatic graph and the induced subgraph of $H$ by $Y$ is a complete blue monochromatic graph. In this way, $H$ is a complete 2-edge-colored graph such that every alternating cycle in $H$ is an alternating cycle in $G$, as no alternating cycle in $H$ contains edges in $H\langle X \rangle$ or $H\langle Y \rangle$; and thus, $H$ is (vertex) alternating-pancyclic if and only if $G$ is (vertex) alternating-pancyclic. This construction is due to Das \cite{Das1982105} and later by H\"{a}ggkvist and Manoussakis \cite{Haggkvist198933}, it was used to study Hamiltonian alternating cycles in bipartite complete 2-edge-colored graphs and it is known as DHM-construction \cite{Bang-Jensen2009}.

Now we will give an example which shows that we are unable to use a similar construction to prove our results.
Consider the graph in Figure \ref{Fig-sum-of-two-4-cycles-without-possible-DHM-construction}. A DHM type construction would add edges to this graph until a complete graph is obtained, the goal of this construction is that the complete graph and the original graph have the same set of alternating cycles. However, it is impossible to do that in this graph. Observe that the graph in Figure \ref{Fig-sum-of-two-4-cycles-without-possible-DHM-construction} satisfies the hypothesis of Theorem \ref{theorem non-singular vertices and no good pair then vertex pancyclic 2 graphs} and thus it is vertex alternating-pancyclic.
\begin{figure}
\begin{center}
\includegraphics[scale=0.9]{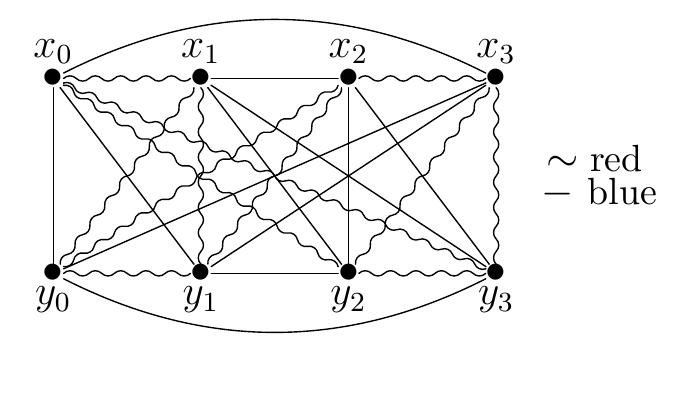}
\caption{Graph $G\in C_1 \oplus C_2$}
\label{Fig-sum-of-two-4-cycles-without-possible-DHM-construction}
\end{center}    
\end{figure}
This simple example shows that the assertions in this paper work for different graphs from bipartite complete and complete 2-edge-colored graphs.

\section{Main result}
\label{sec:main results}

In this section we will generalize Theorem \ref{theorem non-singular vertices and no good pair then vertex pancyclic 2 graphs} for a graph $G$ in the c.g.s. of $k$ vertex disjoint Hamiltonian alternating graphs.
But first we will define a \emph{good cycle}.

From the definition of good pair of edges and Remarks \ref{remark:monochromatic cycle} and \ref{remark par bueno} we obtain the following remark.

\begin{remark}
\label{remark:good pair/monochromatic 4-cycle}
Let $G$ be a graph and let $C_1=x_0 x_1\cdots x_{2n-1} x_0$ and $C_2=y_0 y_1 \cdots y_{2m-1} y_0$ be two vertex disjoint alternating cycles in $G$. If $x_s y_t$ and $y_{t'} x_{s'}$ is a good pair of edges. Then $\mathcal{C}=x_s x_{s'} y_{t'}y_t  x_s$ is a monochromatic 4-cycle such that its edges are alternatively in $E_\oplus$ and $E(C_1)\cup E(C_2)$, namely $x_{s'}y_{t'},\ y_{t}x_{s} \in E_\oplus$,  $x_sx_{s'}\in E(C_1)$ and $y_{t'}y_t\in E(C_2)$.
\end{remark}

\begin{definition}
\label{def:good monochromatic cycle}
Let $G_1$, $G_2$, \ldots, $G_k$ be a collection of pairwise vertex disjoint 2-edge-colored graphs, $G\in \oplus_{i=1}^k G_i$. 
A monochromatic 4-cycle $\mathcal{C}= v_0 v_1 v_2 v_{3}v_0$ in $G$ will be called a \emph{good cycle} when either $v_0v_1,\ v_2v_3 \in E_\oplus$ or $v_1v_2,\ v_3v_0 \in E_\oplus$, or both. This is, when two opposite edges in $\mathcal{C}$ are exterior.
\end{definition}

Notice that all proofs up to this point have been algorithmic, this is, we construct cycles step by step. 
Proceeding by induction on $k$ it is easy to prove the next theorem which is the main result of this paper.

\begin{theorem}
\label{theorem non-singular vertices and no good monochromatic cycle then vertex pancyclic}
Let $G_1$, $G_2$, \ldots, $G_k$ be a collection of $k\geq 2$ vertex disjoint graphs with Hamiltonian alternating cycles, $C_1$, $C_2$, \ldots, $C_k$, respectively, and $G\in \oplus_{i=1}^k G_i$. 
If there is no good cycle in $G$ and, for each pair of different indices $i,j\in [1,k]$, in $C_i$ there is a non-singular vertex with respect to $C_{j}$. Then $G$ is vertex alternating-pancyclic.
\end{theorem}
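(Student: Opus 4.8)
The plan is to proceed by induction on $k$, using Theorem~\ref{theorem non-singular vertices and no good pair then vertex pancyclic 2 graphs} as the base case $k=2$ (a good cycle between two alternating cycles is exactly a good pair by Remark~\ref{remark:good pair/monochromatic 4-cycle}, so "no good cycle" reduces to "no good pair" in that case). For the inductive step, assume the result holds for every c.g.s.\ of fewer than $k$ Hamiltonian alternating graphs, and let $G\in\oplus_{i=1}^k G_i$ satisfy the hypotheses. Fix a vertex $v\in V(G)$; say $v\in V(G_1)$ without loss of generality. We must produce, for each $\ell\in\{3,\dots,|V(G)|\}$, an alternating cycle of length $\ell$ through $v$.

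First I would split on the target length $\ell$. Let $N_1=|V(G_1)|$ and $N=|V(G)|$. If $\ell\le N_1$, apply the induction hypothesis to $H=G\langle V(G_1)\cup V(G_2)\rangle$, which by Remark~\ref{remark subsuma} lies in $G_1\oplus G_2$ and inherits "no good cycle" and the non-singularity condition between $C_1$ and $C_2$; since $N_1\le |V(H)|$, the induction (in the form of Theorem~\ref{theorem non-singular vertices and no good pair then vertex pancyclic 2 graphs}) already gives an alternating cycle of length $\ell$ through $v$ — in fact lengths up to $|V(H)|$. Iterating, the real point is: by absorbing summands one at a time, for any $j$ the subgraph $H_j=G\langle\bigcup_{i=1}^j V(G_i)\rangle$ is again a c.g.s.\ satisfying the hypotheses (Remark~\ref{remark subsuma} plus the fact that "no good cycle" and "non-singular vertex between each $C_i,C_j$" are hereditary), so by the induction hypothesis $H_j$ is vertex alternating-pancyclic, hence contains alternating cycles of every length in $[3,\sum_{i\le j}|V(G_i)|]$ through $v$. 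Taking $j=k-1$ handles all $\ell\le N-|V(G_k)|$, and we still need the lengths in $(N-|V(G_k)|,\,N]$.

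For the remaining large lengths, the idea is to reuse the merging machinery behind Propositions~\ref{proposition non-singular vertices and no good pair then almost  vertex pancyclic} and \ref{proposition non-singular vertices and no good pair then almost  vertex pancyclic 2} one more time, now treating $G_k$ as the "last" block. Concretely: by the induction hypothesis $H_{k-1}$ has a Hamiltonian alternating cycle $C$ on $\bigcup_{i<k}V(G_i)$ passing through $v$; now consider $G$ as (essentially) a c.g.s.\ of the two alternating graphs $C$ (Hamiltonian in $H_{k-1}$) and $C_k$ (Hamiltonian in $G_k$). The non-singularity hypothesis between $C_1$ and $C_k$ gives a non-singular vertex of $C_k$ with respect to... — and here is the subtlety — we need a non-singular vertex with respect to the \emph{new} cycle $C$, not just with respect to $C_1$; but a vertex of $C_k$ that is non-singular with respect to $C_1$ is a fortiori non-singular with respect to any $C\supseteq C_1$, so that direction is free, and in the other direction some $x_j\in V(C_1)\subseteq V(C)$ is non-singular with respect to $C_k$. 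Likewise "no good cycle in $G$" forbids, in particular, any good pair between $C$ and $C_k$ (a good pair there yields a monochromatic $4$-cycle with two opposite exterior edges, i.e.\ a good cycle, unless one of its two block-edges lies inside some $G_i$ — but such an edge would still be "interior" while the other two are exterior, matching Definition~\ref{def:good monochromatic cycle}). Hence Propositions~\ref{proposition non-singular vertices and no good pair then almost  vertex pancyclic} and \ref{proposition non-singular vertices and no good pair then almost  vertex pancyclic 2} apply verbatim to the pair $(C,C_k)$ and deliver alternating cycles of every even... — rather, of every length in $[\text{something}\le|V(C)|,\ |V(C)|+|V(C_k)|]=[\cdot,N]$ through $v$, which covers the missing top range.

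The step I expect to be the main obstacle is precisely checking that "$G$ has no good cycle" correctly descends to "the pair $(C,C_k)$ has no good pair," since $C$ is a long cycle threading through several blocks and a would-be good pair between $C$ and $C_k$ could, in principle, use a chord of $C$ that is itself an exterior edge of $G$ between two of the blocks $G_1,\dots,G_{k-1}$ rather than an edge of some single $C_i$; one must verify that in that case the resulting monochromatic $4$-cycle still has a pair of opposite exterior edges (so it is a good cycle of $G$, contradiction) — and symmetrically, that the edges of $C$ used in the merging construction behave like the edges "$x_sx_{s+1}$" of an alternating cycle regardless of whether they came from $E(C_i)$ or from $E_\oplus$. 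A clean way to organize this is to prove, as a preliminary claim, that if $C$ is any Hamiltonian alternating cycle of $H_{k-1}$ obtained from the inductive construction and $G$ has no good cycle, then there is no good pair between $C$ and $C_k$; the verification is a short case analysis on which of the two "block" edges of the candidate monochromatic $4$-cycle is exterior in $G$. Once that claim is in hand, the rest is a direct citation of the two propositions and a bookkeeping argument that the length intervals $[3,\,N-|V(G_k)|]$ (from induction on $H_{k-1}$) and $[4m_1',\,N]$ together with $[4,4m_1']\cup[2M_2',\,N]$ (from the propositions applied to $(C,C_k)$, with $m_1'=\min\{|V(C)|/2,|V(C_k)|/2\}$ etc.) cover all of $[3,N]$, which they do since the propositions already reach down to length $4$ and up to $N$.
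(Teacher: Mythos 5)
Your proposal is correct and follows the same route the paper intends: the paper's entire proof is the remark ``Proceeding by induction on $k$ it is easy to prove the next theorem,'' i.e.\ induct on $k$, take a Hamiltonian alternating cycle $C$ of $H_{k-1}=G\langle\bigcup_{i<k}V(G_i)\rangle$ through $v$ supplied by the induction hypothesis, and feed the pair $(C,C_k)$ back into Propositions~\ref{proposition non-singular vertices and no good pair then almost  vertex pancyclic} and \ref{proposition non-singular vertices and no good pair then almost  vertex pancyclic 2}, whose combined length ranges already cover every even length in $[4,|V(G)|]$. You correctly isolate and resolve the one genuine subtlety — that a good pair between $C$ and $C_k$ always yields a monochromatic $4$-cycle with two \emph{opposite exterior} edges, hence a good cycle of $G$, even when the edge of $C$ involved is itself an exterior edge between two of the first $k-1$ blocks — which is exactly why the theorem's hypothesis is ``no good cycle'' rather than ``no good pair.''
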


In this paper we considered colored generalized sums of Hamiltonian alternating graphs which contain no good pair (or its generalization, good monochromatic 4-cycle) and obtained conditions that are easy to check to determine if a c.g.s. is vertex alternating-pancyclic. The study of the case when good pairs appear will be treated in a forthcoming paper.


%
%


\bibliography{Vertex-Alternating-Pancyclism-in-2-Edge-Colored-Graphs}


\end{document}